\newtheorem{theorem}{Theorem}
\newtheorem{corollary}{Corollary}
\newtheorem{lemma}{Lemma}
\begin{document}

\vspace*{30px}

\begin{center}\Large

%%% Insert the title of your talk here %%%
\textbf{A Family of the Zeckendorf Theorem Related Identities}
\bigskip\bigskip\large

%%% Insert your first name and family name here %%%
Ivica Martinjak\\
Faculty of Science, University of Zagreb\\
Bijeni\v cka cesta 32, HR-10000 Zagreb, Croatia\\

\end{center}

\begin{abstract} 
In this paper we present a family of identities for recursive sequences arising from a second order recurrence relation, that gives instances of Zeckendorf representation. We prove these results using a special case of an universal property of the recursive sequences. In particular cases we also establish a direct bijection. Besides, we prove further equalities that provide a representation of the sum of $(r+1)$-st and $(r-1)$-st Fibonacci number as the sum of powers of the golden ratio. Similarly, we show a class of natural numbers represented as the sum of powers of the silver ratio. 
\end{abstract}

\noindent {\bf Keywords:} Zeckendorf representation, recursive sequence, Fibonacci numbers, Pell numbers, Diophantine equation \\
\noindent {\bf AMS Mathematical Subject Classifications:} 11B39, 11B37

\section{Introduction}

According to the Zeckendorf's Theorem, every natural number $n$ is uniquely represented as the sum of nonconsecutive Fibonacci numbers $F_k$,
\begin{eqnarray*}
n=F_{k_1}+F_{k_2}+\cdots + F_{k_m}, \enspace k_{i+1} \ge k_{i}+2, \enspace k_i \ge2.
\end{eqnarray*}
Such a sum is called the {\it Zeckendorf representation} of $n$ \cite{Zeck, Zeck2}. The Fibonacci sequence can be naturally extended to negative indexes using the same defining recurrence relation, and terms in this sequence are sometimes called {\it negafibonacci numbers}. D. Knuth has shown that there is a unique representation of an integer $N$ in negafibonacci numbers \cite{Knut}. Representations $$79 =F_{10} + F_{8} + F_{4} = 55 +21 +3$$ and $$-37=F_{-5}+F_{-7}+F_{-10}= 5 +13+(-55)$$ are examples of the former and the latter. Fibonacci identities like
\begin{eqnarray}
4F_n &=& F_{n+2} + F_n + F_{n-2},  \enspace n \ge 2 \label{eqFibo4F} \\
5F_n &=& F_{n+3} + F_{n-1} + F_{n-4}, \enspace n \ge 4 \label{eqFibo5F} \\
6F_n &=& F_{n+3} + F_{n+1} + F_{n-4}, \enspace n \ge 4  \\
11F_n &=& F_{n+4} + F_{n+2} + F_n + F_{n-2} + F_{n-4}, \enspace n \ge 4 
\end{eqnarray} 
gives examples of Zeckendorf representation. According to (\ref{eqFibo5F}), we have 
\begin{eqnarray*}
65&=& F_{10} + F_6 + F_3= 55 + 8 + 2,\\
105& =& F_{11} + F_{7} + F_{4} = 89+13 +3, \ldots
\end{eqnarray*}
Note that the indexes of Fibonacci numbers within these identities are the same as the exponents in the expansion of natural numbers in powers of the golden ratio $\phi$. In particular, $5 = \phi^3 + \phi^{-1} + \phi^{-4}$, $6 = \phi^3 + \phi^{1} + \phi^{-4}$, \ldots. 

This paper aim at finding identities encountering Zeckendorf representation. We also extend these ideas to more general recursive sequences.

\section{Preliminaries}

Given $c_1,c_2, \ldots, c_k \in \mathbb{N}_0$, a $k$-th order {\it linear recurrence} is defined by the recurrence relation 
\begin{eqnarray}\label{eqRecurrenceGeneral}
a_n=c_1a_{n-1}+c_2 a_{n-2} +\cdots + c_ka_{n-k}, \enspace  n \ge k
\end{eqnarray}
and the initial values $a_0, a_1,\ldots, a_{k-1}$. We let $(a_n)_{n \ge 0}$ denote the sequence of numbers $a_0, a_1, \ldots$ defined by this recurrence. The following Lemma \ref{lemTilings} gives combinatorial interpretation of the terms of $(a_n)_{n \ge 0}$ \cite{BeQu}.

\begin{lemma} \label{lemTilings}
Let $a_0=1$. Then $a_n$ is equal to the number of colored {\it tilings} of an {\it n-board} with {\it tiles} of length at most $k$, where a tile of length $i$ can be colored in $c_i$ colors, $1 \le i \le k$. 
\end{lemma}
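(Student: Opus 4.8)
The plan is to show that the quantity $t_n$, defined as the number of colored tilings of an $n$-board subject to the stated rules (tiles of length $i \le k$, a length-$i$ tile colorable in $c_i$ ways), satisfies exactly the same recurrence and the same initial data as $(a_n)_{n\ge 0}$, whence $t_n=a_n$ for every $n\ge 0$ by induction.

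First I would settle the base cases. The $0$-board admits precisely one tiling, the empty one (no tiles placed), so $t_0 = 1 = a_0$; it is important to count the empty tiling here, as this is what pins the value down to $1$ rather than $0$. For $1 \le n \le k-1$ I would record that the same decomposition used below yields $t_n = \sum_{i=1}^{n} c_i\, t_{n-i}$, which is the value forced on $a_n$ by the recurrence restricted to the initial segment; stating this explicitly gives the induction a proper foundation.

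The heart of the argument is a decomposition of an arbitrary colored tiling $T$ of an $n$-board according to its last tile. That tile occupies cells $n-i+1,\ldots,n$ for a unique length $i$ with $1 \le i \le \min(n,k)$ and carries one of its $c_i$ admissible colors; removing it leaves a colored tiling of the $(n-i)$-board. The assignment $T \mapsto \bigl(\text{colored tiling of the }(n-i)\text{-board},\ \text{color of the removed tile}\bigr)$ is reversible — append the prescribed colored tile of length $i$ on the right — hence a bijection. Summing over the possible values of $i$ gives $t_n = \sum_{i=1}^{\min(n,k)} c_i\, t_{n-i}$, and for $n \ge k$ this is exactly the relation (\ref{eqRecurrenceGeneral}).

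Putting the pieces together, $(t_n)$ and $(a_n)$ agree on $0,1,\ldots,k-1$ and obey the same order-$k$ linear recurrence for $n\ge k$, so they coincide for all $n$. I do not expect a genuine obstacle; the only points demanding a little care are the treatment of the range $n<k$, where the last tile may be shorter than $k$ and the literal recurrence (\ref{eqRecurrenceGeneral}) would invoke undefined negative-index terms, and checking that the "remove the last tile" map really is a bijection (and not merely a surjection) so that the count is exact.
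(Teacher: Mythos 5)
Your proposal is correct and takes essentially the same route as the paper: both show the tiling counts satisfy the recurrence (\ref{eqRecurrenceGeneral}) by conditioning on a terminal tile (you remove the last tile, the paper removes the first) and then match initial data. Your explicit treatment of the empty tiling of the $0$-board and of the range $1\le n\le k-1$ is slightly more careful than the paper, which only verifies $a_0=a_0'=1$, but the underlying argument is the same.
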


\begin{proof}
We prove that the number of an $n$-board tilings obey the same recurrence relation as the sequence $(a_n)_{n \ge 0}$. 

We let $a_n'$ denote the number of tilings of an $n$-board. The set of all such $n$-board tilings can be divided into $k$ subsets, where tilings in the $i$-th subset begin with a tile of length $i$, $1 \le i \le k$. The number of tilings in the $i$-th subset is equal to $c_i a'_{n-i}$ which means that the whole set of $n$-board tilings counts $a'_n$ tilings,
\begin{eqnarray*}
a'_n = c_1 a'_{n-1} + c_2 a'_{n-2} + \cdots + c_k a'_{n-k}.
\end{eqnarray*}
From the fact that $a_0=a'_0=1$ it follows $a'_n=a_n$ which completes the proof.
\end{proof}

When $k=2$ relation (\ref{eqRecurrenceGeneral}) reduces to a second order recurrence relation,
\begin{eqnarray} \label{eqSecondOrRec}
u_{n+2}= su_{n+1} + t u_{n}, \enspace  n \ge 0.
\end{eqnarray}
This class of recurrences is of particular interest. According to previous arguments, when $u_0=1$ then $u_n$ represents the number of $n$-board tilings with tiles of length 1 and 2, where these tiles are colored in $s$ and $t$ colors, respectively. Tiles of length 1 are called {\it squares} while tiles of length 2 are called {\it dominoes}.

We use a notion of a {\it breakable} cell of a board tiling when proving the following Lemma \ref{lemMplusN}. It is said that an $n$-board tiling is breakable at cell $m$ if it contains a square at cell $m$ or a domino at cells $m-1$ and $m$. Otherwise it contains a domino covering cells $m$ and $m+1$ and such a tiling is {\it unbreakable} at cell $m$.

\begin{lemma} \label{lemMplusN} 
For the recursive sequence $(u_n)_{n \ge 0}$ and $m \ge0$ we have
\begin{eqnarray} \label{eqMplusN}
u_{m+n}= u_{m}u_n +  t u_{m-1}u_{n-1}.
\end{eqnarray}
\end{lemma}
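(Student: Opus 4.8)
The plan is to prove the identity~\eqref{eqMplusN} combinatorially, using the tiling interpretation of $(u_n)_{n\ge 0}$ established after Lemma~\ref{lemTilings}: since $u_0=1$, the number $u_{m+n}$ counts colored tilings of an $(m+n)$-board with squares ($s$ colors) and dominoes ($t$ colors). I would fix the cell $m$ and split the set of all $(m+n)$-board tilings according to whether the tiling is \emph{breakable} or \emph{unbreakable} at cell $m$, in the sense defined just before the statement.

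The first step is the breakable case. If an $(m+n)$-board tiling is breakable at cell $m$, then no tile straddles cells $m$ and $m+1$, so the tiling decomposes uniquely into a tiling of the initial $m$-board and a tiling of the terminal $n$-board (cells $m+1,\dots,m+n$). Conversely any such pair glues to a breakable tiling. Hence the breakable tilings are counted by $u_m u_n$. The second step is the unbreakable case: here a single domino covers cells $m$ and $m+1$, contributing a factor $t$ for its color; removing it leaves an arbitrary tiling of the first $m-1$ cells and an arbitrary tiling of the last $n-1$ cells, so these tilings number $t\,u_{m-1}u_{n-1}$. Adding the two disjoint cases gives $u_{m+n}=u_m u_n + t\,u_{m-1}u_{n-1}$, which is~\eqref{eqMplusN}.

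The main thing to be careful about is the boundary bookkeeping rather than any real obstacle: one should check that the formula still reads correctly in the degenerate ranges, e.g.\ when $m=0$ or $m=1$ (so that $u_{m-1}$ refers to $u_{-1}$ or the empty board $u_0=1$), and that ``breakable at cell $m$'' genuinely partitions all tilings into exactly these two classes for $1\le m\le m+n$. A brief remark reconciling the convention $u_{-1}$ (or restricting to $m\ge 1$, $n\ge 1$, with the trivial cases checked separately) suffices. If one prefers an algebraic alternative, the identity also follows by induction on $n$ from the recurrence~\eqref{eqSecondOrRec} together with the $n=0,1$ base cases, but the tiling argument is cleaner and matches the combinatorial framework the paper has set up.
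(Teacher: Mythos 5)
Your proposal is correct and follows essentially the same route as the paper: the paper's proof also partitions the $(m+n)$-board tilings into those breakable at cell $m$ (counted by $u_m u_n$) and those unbreakable there (counted by $t\,u_{m-1}u_{n-1}$) and adds the two counts. Your extra attention to the boundary cases $m=0,1$ is a reasonable refinement the paper glosses over, but it does not change the argument.
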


\begin{proof}
Let consider condition on breakability at cell $m$ of an $(m+n)$-board. We let $A$ denote the set of $(m+n)$-board tilings breakable at cell $m$. On the other hand, let the set $B$ contains those tilings that are unbreakable at cell $m$. Clearly, the set $A$ counts $u_{m}u_{n}$ elements while the set $B$ counts $t u _{m-1} u_{n-1}$ tilings. The fact that
\begin{eqnarray*}
u_{m+n} = |A| + |B|
\end{eqnarray*}
completes the statement of the lemma.
\end{proof}

In what follows in this paper Lemma \ref{lemMplusN} has proved to be very useful.

Two notable representatives of the sequences defined by a second order recurrence (\ref{eqSecondOrRec}) are the Fibonacci sequence and the Pell sequence. We let $(F_n)_{n \ge 0}$ denote the Fibonacci sequence and $(P_n)_{n\ge 0}$ denote the Pell sequence. The Fibonacci sequence arises from (\ref{eqSecondOrRec}) when $s=t=1$ and when initial values are 0 and 1. In other words the sequence of Fibonacci numbers is defined by the recurrence 
\begin{eqnarray} \label{eqFiboDef}
F_{n+2} = F_{n+1} + F_{n}, \enspace F_0=0, \enspace F_1=1.
\end{eqnarray}
Close companions of the Fibonacci sequence are {\it Lucas numbers} $(L_n )_{n \ge 0}$, that are defined by the same recurrence relation but with initial values $L_0=2$ and $L_1=1$. It is worth mentioning that these sequences can also be defined as the only solutions $(x,y)$, $x=L_n$, $y=F_n$ of the Diophantine equation
\begin{eqnarray*}
x^2 - 5y^2 = 4(-1)^n, \enspace n \in \mathbb{N}_0.
\end{eqnarray*} 

There are numerous properties and identities known for the Fibonacci sequence. One can find more in a classic reference on this subject \cite{Vajda}. Recall that the closed form for Fibonacci sequence, called Binet formula, is
\begin{eqnarray}
F_n= \frac{\phi ^n - \bar{\phi} ^n}{\sqrt{5}}
\end{eqnarray}
where $\phi = \frac{1+\sqrt{5}}{2}$ and $\bar{\phi } = \frac{1-\sqrt{5}}{2} $ are solutions of the equation $x^2-x-1=0$. The golden ratio $\phi$ and its conjugate $\bar{\phi}$ are also solutions of the equation $x^n = x^{n-1} + x^{n-2}$ meaning that both values $\phi$ and $\bar{\phi}$ satisfy the Fibonacci recursion (\ref{eqFiboDef}),
\begin{eqnarray*}
\phi^n = \phi^{n-1} + \phi^{n-2}\\
\bar{\phi}^n = \bar{\phi}^{n-1} + \bar{\phi}^{n-2}.
\end{eqnarray*}
There is also recurrence relation encountering the $n$-th power of golden ratio and $n$-th Fibonacci number,
\begin{eqnarray*}
\phi^n =\phi F_n + F_{n-1}.
\end{eqnarray*}

The Pell sequence is defined by the recurrence relation
\begin{eqnarray}
P_{n+2} = P_{n+1} + P_{n}, \enspace P_0=0, \enspace P_1=1.
\end{eqnarray}
The sequence arising from the same recurrence but with initial values 2 and 1 is called {\it Pell-Lucas sequence}. We let $(Q_n)_{n \ge 0}$ denote this sequence. Equivalently, the Pell and Pell-Lucas sequence can be defined as the solutions $(x, y)$, $x=Q_n/2$, $y=P_n$ of the Diophantine equations 
\begin{eqnarray*}
x^2 - dy^2 &=&1\\
x^2 - dy^2 &=&-1
\end{eqnarray*}
when $d=2$. We let $\varphi$ denote the {\it silver ratio}, $\varphi = 1+ \sqrt{2}$, and we let $\bar{\varphi}=1-\sqrt{2}$. Then the closed formula for the $n$-th term in Pell sequence $P_n$ can be written as
\begin{eqnarray}\label{PellClosed}
P_n= \frac{\varphi^n - \bar{\varphi}^n}{ \varphi - \bar{\varphi}}.
\end{eqnarray}

According to the previous arguments, both the Fibonacci and the Pell sequences can be represented as the number of board tilings. However, terms of these sequences start by 0 which means that they are shifted by 1 in comparison to the sequence of related tilings. More precisely, denoting the number of an $n$-board tilings with uncolored squares and dominoes by $f_n$, we have
\begin{eqnarray}
f_n=F_{n+1}.
\end{eqnarray}
Similarly, we let $p_n$ denote the number of tilings of an $n$-board with squares in two colors and uncolored dominoes. The number of such $n$-board tilings is equal to the $(n+1)$-st Pell number, 
\begin{eqnarray}
p_n=P_{n+1}.
\end{eqnarray}
It is worth mentioning that there are numerous results known about the Pell sequence, including basic identities presented in \cite{Bick} and some number properties shown in \cite{Bravo} and \cite{Duje}. On the Zeckendorf representation by Pell numbers one can find more in \cite{Hora} and \cite{Hora2}.

\section{The main result}

We define the sequence $(U_n)_{n\ge 0}$ such that 
\begin{eqnarray}
u_n=U_{n+1}.
\end{eqnarray}

\begin{theorem} \label{thmFamily} For the sequence $(U_n)_{n\ge 0}$,  $r \in \mathbb{N}$, $r \equiv 0 \pmod 2$ and $t=1$
\begin{eqnarray}\label{eqFamily}
(U_{r+1} + U_{r-1} )U_n = U_{n+r} + U_{n-r}.
\end{eqnarray}
\end{theorem}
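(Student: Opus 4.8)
The plan is to obtain \eqref{eqFamily} from Lemma \ref{lemMplusN}, applied separately to $U_{n+r}$ and to $U_{n-r}$, and then added. Rewriting Lemma \ref{lemMplusN} in terms of $(U_n)$ via $u_n=U_{n+1}$ and using $t=1$, the relation $u_{m+n}=u_mu_n+tu_{m-1}u_{n-1}$ becomes
\[
U_{a+b+1}=U_{a+1}U_{b+1}+U_aU_b ,
\]
which holds for $a,b\ge 0$ straight from the lemma. Taking $a=r-1$, $b=n$ gives at once $U_{n+r}=U_rU_{n+1}+U_{r-1}U_n$. If one also has $U_{n-r}=U_{r+1}U_n-U_rU_{n+1}$, then adding the two identities makes the terms $\pm U_rU_{n+1}$ cancel and leaves exactly $(U_{r+1}+U_{r-1})U_n$ on the right, which is \eqref{eqFamily}. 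So the whole content is to produce that second formula, and this is where the hypothesis $r\equiv 0\pmod 2$ will be used.

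To get the formula for $U_{n-r}$ I would invoke the universal property of these recursions in the second-order case. Extend $(U_n)$ to all of $\mathbb{Z}$ by the recurrence $U_{m+2}=sU_{m+1}+U_m$; this recurrence is reversible because the coefficient of $U_m$ equals $t=1\neq 0$, and hence any solution is determined by two consecutive values. Consequently any identity that is linear in shifts of such solutions, once verified at two consecutive indices, holds for all integer indices. Two applications: first, the displayed $U$-form of Lemma \ref{lemMplusN} extends to all $a,b\in\mathbb{Z}$ (fix $b$; both sides solve the recurrence in $a$ and agree at $a=0,1$ by the lemma); second, the reflection $U_{-k}=(-1)^{k+1}U_k$ holds for all $k$ (both sides solve the same recurrence in $k$ and agree at $k=0,1$, using $U_0=0$ and $U_1=1$, which are forced by $u_0=1$, $u_1=s$, i.e.\ $u_{-1}=0$). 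Now put $a=-r-1$, $b=n$ in the extended identity: $U_{n-r}=U_{-r}U_{n+1}+U_{-r-1}U_n$; since $r$ is even the reflection formula gives $U_{-r}=-U_r$ and $U_{-r-1}=U_{r+1}$, so $U_{n-r}=U_{r+1}U_n-U_rU_{n+1}$, exactly as needed, and the proof closes by the addition described above.

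The only delicate point is the passage to negative indices: Lemma \ref{lemMplusN} was proved combinatorially, so a priori it concerns only boards of nonnegative length, and the universal-property argument is precisely what bridges this gap; it is also the step that consumes $r\equiv 0\pmod 2$, turning the signs $(-1)^{r+1}$ and $(-1)^{r+2}$ into $-1$ and $+1$. An alternative that avoids negative indices entirely: for fixed $r$, both sides of \eqref{eqFamily} are solutions of $X_{n+2}=sX_{n+1}+X_n$ in $n$, so it suffices to check $n=r$ and $n=r+1$; the case $n=r$ reduces via Lemma \ref{lemMplusN} to $U_0=0$, and the case $n=r+1$ to the Cassini-type relation $U_{r-1}U_{r+1}-U_r^2=(-1)^r$, which equals $1$ precisely because $r$ is even. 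Finally, in the Fibonacci ($s=1$) and Pell ($s=2$) specializations one can replace these computations by an explicit sign-reversing pairing of tilings, which is presumably the ``direct bijection'' mentioned in the abstract.
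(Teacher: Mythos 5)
Your proposal is correct and follows essentially the same route as the paper: apply Lemma \ref{lemMplusN} to $U_{n+r}$ and $U_{n-r}$ separately, use the reflection formula for negative indices, and let the parity of $r$ (together with $t=1$) cancel the cross terms. You merely break the board at a different cell, and you are in fact more careful than the paper in justifying the extension of the lemma to negative indices; the Cassini-based alternative you sketch is also valid but not what the paper does.
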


\begin{proof}

The definition of the sequence $(U_n)_{n\ge 0}$ can be extended to negative indexes. It is obvious that the term having index $-n$ is uniquely determined by that having index $n$,
\begin{eqnarray*}
U_{-n}=(-1)^{n+1} \frac{U_n}{t^n}.
\end{eqnarray*}
Now we apply Lemma \ref{lemMplusN} to both terms in the sum $U_{n+r} + U_{n-r}$,
\begin{eqnarray*}
U_{n+r}+U_{n-r} &=& t U_{n-1}U_r + U_nU_{r+1} + t U_{n-1}U_{-r} + U_nU_{-r+1}\\
    &=&  t U_{n-1}P_r + U_nU_{r+1} + (-1)^{r+1} \frac{ U_{n-1}U_{r} }{t^{n-1}} +(-1)^{r}   \frac{ U_nU_{r-1}}{t^n}\\
   &= & U_nU_{r+1}  +(-1)^r \frac{ U_nU_{r-1} }{t^n} + U_{n-1}U_r \Bigg ( t + \frac{(-1)^{r+1}}{t^{n-1}}  \Bigg).
\end{eqnarray*}
When $t=1$ and $r$ is even the expression above reduces to the first two terms, $U_n U_{r+1} + U_n U_{r-1}$, which completes the proof.
\end{proof}

Since both Fibonacci and Pell sequences satisfy constraint on the coefficient $t$ in Theorem \ref{thmFamily} there are two immediate corollaries of Theorem \ref{thmFamily}.

\begin{corollary}
For the sequence $(F_n)_{n \ge 0}$ of Fibonacci numbers and an even $r \ge2$ we have
\begin{eqnarray} \label{eqFamilyFibo}
(F_{r+1} + F_{r-1} ) F_n = F_{n+r} + F_{n-r}.
\end{eqnarray}
\end{corollary}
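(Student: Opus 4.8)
The plan is to obtain \eqref{eqFamilyFibo} directly from Theorem \ref{thmFamily} by observing that the Fibonacci sequence is an instance of the sequence $(U_n)_{n \ge 0}$ studied there. Recall that $(u_n)_{n \ge 0}$ was defined by the second-order recurrence \eqref{eqSecondOrRec} with parameters $s$ and $t$, and that $(U_n)_{n \ge 0}$ is the shift $u_n = U_{n+1}$. Taking $s = t = 1$ makes $(u_n)$ satisfy the Fibonacci recurrence, and the initial conditions line up so that $U_n = F_n$ for all $n \ge 0$: indeed $u_0 = 1 = f_0 = F_1 = U_1$ and $u_1 = 1 = F_2 = U_2$, so the shift $u_n = U_{n+1}$ matches $U_{n} = F_{n}$. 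The first step is therefore simply to record this identification and to note that the hypothesis $t = 1$ of Theorem \ref{thmFamily} is met.

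Second, I would check that the congruence and range hypotheses transfer correctly. Theorem \ref{thmFamily} requires $r \in \mathbb{N}$ with $r \equiv 0 \pmod 2$; the corollary states this as ``an even $r \ge 2$,'' which is the same condition. Substituting $U_n = F_n$ into \eqref{eqFamily} yields exactly $(F_{r+1} + F_{r-1}) F_n = F_{n+r} + F_{n-r}$, which is \eqref{eqFamilyFibo}. One should also remark that the identity is to be read with the standard extension of $F_n$ to negative indices (via $F_{-n} = (-1)^{n+1} F_n$), consistent with the extension of $(U_n)$ used in the proof of Theorem \ref{thmFamily}; for $n \ge r$ all indices are nonnegative and no such remark is needed.

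The argument is essentially a one-line specialization, so there is no real obstacle; the only point demanding a moment's care is the bookkeeping of the two index shifts — first from $(u_n)$ to $(U_n)$, then the offset between $(u_n)$ and the Fibonacci numbers $F_n$ recorded earlier via $f_n = F_{n+1}$ — to be sure that $U_n$ coincides with $F_n$ on the nose rather than with some shifted or rescaled variant. Once that is confirmed, the corollary follows immediately, and it would be worth adding the parallel sentence that the companion statement for Pell numbers, with $s = 2$ and $t = 1$, follows in exactly the same way.
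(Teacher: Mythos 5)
Your proposal is correct and matches the paper's intent exactly: the paper presents this corollary as an immediate specialization of Theorem \ref{thmFamily} to the case $s=t=1$, which is precisely what you do, and your careful check that the index shifts line up so that $U_n=F_n$ on the nose is a sound (if implicit in the paper) piece of bookkeeping. Nothing further is needed.
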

The first particular representative of the family of identities (\ref{eqFamilyFibo}) is 
\begin{eqnarray} \label{eqFibo3F}
3F_n &=& F_{n+2} + F_{n-2},
\end{eqnarray}
while further identities are 
\begin{eqnarray}
 7 F_n &=& F_{n+4} + F_{n-4} \label{eqFibo7F} \\
18 F_n &=& F_{n+6} + F_{n-6} \label{eqFibo18F}.
\end{eqnarray}

\begin{corollary} For the sequence $(P_n)_{n \ge 0}$ of Pell numbers and an even $r \ge 2$ we have
\begin{eqnarray} \label{eqPellFamily}
(P_{r+1} + P_{r-1} ) P_n = P_{n+r} + P_{n-r}.
\end{eqnarray}
\end{corollary}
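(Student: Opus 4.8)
The plan is to obtain this as a direct specialization of Theorem~\ref{thmFamily}, so that the only real work is checking that the Pell sequence fits the hypotheses of that theorem. Recall that $(U_n)_{n\ge 0}$ was introduced via $u_n=U_{n+1}$, where $(u_n)_{n\ge 0}$ satisfies the second order recurrence $u_{n+2}=su_{n+1}+tu_n$. For the Pell numbers the relevant parameters are $s=2$, $t=1$ (squares in two colours, uncoloured dominoes): since $p_n=P_{n+1}$ counts those tilings of an $n$-board, taking $u_n=p_n$ gives $U_{n+1}=u_n=p_n=P_{n+1}$, i.e. $U_n=P_n$ for all $n$. In particular $t=1$, which is exactly the coefficient constraint required in Theorem~\ref{thmFamily}.

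Carrying this out, first I would record that, because $t=1$, the negative-index extension used in the proof of Theorem~\ref{thmFamily} specializes to $P_{-n}=(-1)^{n+1}P_n$; this makes the right-hand side $P_{n+r}+P_{n-r}$ well defined for every $n\ge 0$, and for $n\ge r$ all indices occurring are already nonnegative. Then I would simply apply Theorem~\ref{thmFamily} with the identification $U_n=P_n$ and with $r$ even, obtaining $(P_{r+1}+P_{r-1})P_n=P_{n+r}+P_{n-r}$ at once. As a self-contained alternative one could instead argue from the closed form (\ref{PellClosed}): writing $P_n=(\varphi^n-\bar\varphi^n)/(\varphi-\bar\varphi)$ and using $\varphi\bar\varphi=-1$, one checks $P_{r+1}+P_{r-1}=\varphi^r+\bar\varphi^r$, and multiplying by $P_n$ and collecting terms yields $(P_{r+1}+P_{r-1})P_n=P_{n+r}+(-1)^rP_{n-r}$, which for even $r$ is precisely the claimed identity. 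This mirrors, at the level of Binet-type formulas, the role played by Lemma~\ref{lemMplusN} in the proof of Theorem~\ref{thmFamily}.

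The main obstacle here is bookkeeping rather than mathematics: one must make sure the parameter matching $s=2$, $t=1$ and the index shift $U_n=P_n$ are set up correctly, and that the parity hypothesis $r\equiv 0\pmod 2$ is tracked so that the terms carrying the factor $(-1)^{r+1}$ in the proof of Theorem~\ref{thmFamily} (equivalently, the factor $(-1)^r$ in the Binet computation above) vanish. Once those points are in place the statement is immediate.
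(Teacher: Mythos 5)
Your proposal is correct and follows the same route as the paper, which states this corollary as an immediate specialization of Theorem~\ref{thmFamily} to the Pell case ($s=2$, $t=1$, $U_n=P_n$); your careful bookkeeping of the parameter matching and the negative-index convention only makes explicit what the paper leaves implicit. The alternative Binet-formula verification you sketch is a valid bonus but not needed.
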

Here we have 
\begin{eqnarray}
6 P_n &=& P_{n+2} + P_{n-2} \label{eqPell6P} \\ 
 34 P_n &=& P_{n+4} + P_{n-4}\\
198 P_n &=& P_{n+6} + P_{n-6} \label{eqPell198P}.
\end{eqnarray}
as the first three particular identities of the family (\ref{eqPellFamily}).

Thus, the first representative of (\ref{eqFamily}) arises when $r=2$,
\begin{eqnarray} \label{eqFamilyFirstCaseGeneral} 
(U_{3} + U_{1} ) U_n = U_{n+2} + U_{n-2}.
\end{eqnarray}
Note that it also can be proved directly using Lemma \ref{lemMplusN},
\begin{eqnarray*}
U_{n+2} + U_{n-2} &=& U_{n-1} U_2 + U_nU_3 + U_{n-1}U_{-2} + U_nU_{-1}\\
  &=&U_n(U_3 + U_1).
\end{eqnarray*}
Moreover, there is a combinatorial proof of this identity and we are going to present it on the instance of Pell sequence. According to the previous definition there are $p_n$ ways to tile an $n$-board with squares in two colors and uncolored dominoes. In order to form $(n+2)$-board tilings ending with
\begin{description}
\item {\it {i)} } a domino,
\item {\it {ii)} } two black squares,
\item {\it {iii)} } black square and white square, respectively,
\item {\it {iv)} } white square and black square, respectively,
\item {\it {v)} } two white squares,
\end{description}
we need five sets of an $n$-board tilings. These $(n+2)$-board tilings are obtained by gluing tiles declared above to the end of $n$-boards in every of these five sets. To complete the set of $(n+2)$-board tilings we need those tilings ending with a square preceded by a domino. This is achieved when we get a set of an $n$-board tilings and insert a domino before a square when appropriate and cut the last domino otherwise. This operation completes the set of $(n+2)$-board tilings and in the same time leave the set of $(n-2)$-board tilings. Clearly, the described procedure of gluing and cut tiles holds in both directions which proves that $$6p_n=p_{n+2} + p_{n-2}.$$

Note that the parameter $r$ within identities  (\ref{eqFibo3F}) - (\ref{eqFibo18F}) is the same as exponents in the expansion of resulting sum $F_{r+1} + F_{r-1}$ in powers of $\phi$,
\begin{eqnarray*}
3= \phi^2 + \phi^{-2},\ldots . 
\end{eqnarray*}
Similarly, we have
\begin{eqnarray*}
6 &=& \varphi^2 + \varphi^{-2},\\ 
34 &=& \varphi^4 + \varphi^{-4}, \ldots
\end{eqnarray*}
where exponents corresponds with the value of parameter $r$  within identities (\ref{eqPell6P}) - (\ref{eqPell198P}). These facts are generalized in the following Theorem \ref{thmFiboGolden} and Theorem \ref{thmPellSilver}.

\begin{theorem} \label{thmFiboGolden} For the Fibonacci sequence $(F_n)_{n \ge 0}$ and $r \in \mathbb{Z}$, $r \equiv 0 \pmod 2$
\begin{eqnarray}
F_{r+1} + F_{r-1} = {\phi}^r + {\phi}^{-r}.
\end{eqnarray}
\end{theorem}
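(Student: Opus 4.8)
The plan is to reduce the claim to two elementary facts about the golden ratio: first, that $F_{r+1}+F_{r-1}=\phi^{r}+\bar{\phi}^{\,r}$ for \emph{every} integer $r$ (that is, that this quantity is the Lucas number $L_r$), and second, that $\bar{\phi}^{\,r}=\phi^{-r}$ precisely because $r$ is even. Combining the two immediately gives $F_{r+1}+F_{r-1}=\phi^{r}+\phi^{-r}$.

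For the first fact I would invoke the relation recorded in the Preliminaries, $\phi^{n}=\phi F_n+F_{n-1}$, together with its conjugate $\bar{\phi}^{\,n}=\bar{\phi}F_n+F_{n-1}$, valid because $\bar{\phi}$ satisfies the same recursion. Adding these and using $\phi+\bar{\phi}=1$ yields
\begin{eqnarray*}
\phi^{n}+\bar{\phi}^{\,n}=(\phi+\bar{\phi})F_n+2F_{n-1}=F_n+2F_{n-1}=F_{n+1}+F_{n-1},
\end{eqnarray*}
the last step being just $F_n+F_{n-1}=F_{n+1}$. Equivalently one may substitute the Binet formula directly into $F_{r+1}+F_{r-1}$, factor out $\phi^{r-1}$ and $\bar{\phi}^{\,r-1}$, and use $\phi^{2}+1=\sqrt{5}\,\phi$ and $\bar{\phi}^{\,2}+1=-\sqrt{5}\,\bar{\phi}$; either route is a one-line computation. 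Since the Fibonacci sequence extends to negative indices by the same recurrence and integer powers of the nonzero reals $\phi,\bar{\phi}$ are unambiguous, this identity holds for all $r\in\mathbb{Z}$.

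For the second fact I would use that $\phi$ and $\bar{\phi}$ are the two roots of $x^{2}-x-1=0$, so their product is $\phi\bar{\phi}=-1$. Hence $\phi^{r}\bar{\phi}^{\,r}=(\phi\bar{\phi})^{r}=(-1)^{r}=1$ when $r$ is even, which is exactly $\bar{\phi}^{\,r}=\phi^{-r}$. Substituting into the first fact completes the proof. This is also where the parity hypothesis is essential: for odd $r$ one would instead obtain $\phi^{r}-\phi^{-r}=\sqrt{5}\,F_r$, a different expression.

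There is no real obstacle here: the result is a short deduction from the golden-ratio recursion already cited (or from Binet) together with the parity observation $\phi\bar{\phi}=-1$. The only points deserving a word of care are confirming that $F_{r+1}+F_{r-1}=\phi^{r}+\bar{\phi}^{\,r}$ legitimately covers negative even $r$ — which it does, via the negafibonacci extension and the fact that integer powers of $\phi$ and $\bar{\phi}$ are well defined — and flagging explicitly that evenness of $r$ is what collapses $\bar{\phi}^{\,r}$ to $\phi^{-r}$.
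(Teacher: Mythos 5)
Your proof is correct, but it follows a genuinely different route from the paper's. The paper deduces the theorem from the already-established family identity $(F_{r+1}+F_{r-1})F_n=F_{n+r}+F_{n-r}$: it substitutes the Binet formula into both sides, cancels the factor corresponding to $F_n$, and then verifies by cross-multiplication that the resulting quotient $\bigl(\phi^{n+r}-\bar{\phi}^{n+r}+\phi^{n-r}-\bar{\phi}^{n-r}\bigr)/\bigl(\phi^{n}-\bar{\phi}^{n}\bigr)$ equals $\phi^{r}+\phi^{-r}$, using $\bar{\phi}=-1/\phi$ and the evenness of $r$ in the final step. You instead prove the statement directly and self-containedly: you first identify $F_{r+1}+F_{r-1}$ as $\phi^{r}+\bar{\phi}^{\,r}$ (the Lucas number $L_r$) for \emph{all} integers $r$, by adding $\phi^{n}=\phi F_n+F_{n-1}$ to its conjugate and using $\phi+\bar{\phi}=1$, and then collapse $\bar{\phi}^{\,r}$ to $\phi^{-r}$ via $\phi\bar{\phi}=-1$ and the parity of $r$. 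Your argument buys independence from Theorem \ref{thmFamily} and its corollary (the paper's proof inherits whatever restrictions that identity carries, and its reduction step is stated rather tersely), it makes transparent exactly where the hypothesis $r\equiv 0\pmod 2$ is used, and it cleanly separates the two ingredients (the Lucas identity and the parity observation); the paper's approach, in exchange, exhibits the theorem as a natural consequence of the family of Zeckendorf-type identities that is the main subject of the article. Both are valid; your handling of negative even $r$ via the negafibonacci extension is also sound.
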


\begin{proof}
Applying the closed formula for Fibonacci sequence to (\ref{eqFamilyFibo}) we get
\begin{eqnarray*}
\Bigg [ \frac{\phi^{r+1} - \bar{\phi}^{r+1}  }{\sqrt{5}}  + \frac{\phi^{r-1} - \bar{\phi}^{r-1}  }{\sqrt{5}} \Bigg ] \Bigg( \frac{\phi^n - \bar{\phi}^n}{\sqrt{5}} \Bigg)&=& \frac{\phi^{n+r} - \bar{\phi}^{n+r}  }{\sqrt{5}}  + \frac{\phi^{n-r} - \bar{\phi}^{n-r}  }{\sqrt{5}}  \\
\Bigg [ \frac{\phi^{r+1} - \bar{\phi}^{r+1}  }{\sqrt{5}}  + \frac{\phi^{r-1} - \bar{\phi}^{r-1}  }{\sqrt{5}} \Bigg ] &=& \frac{\phi^{n+r} - \bar{\phi}^{n+r} + \phi^{n-r} - \bar{\phi}^{n-r}}{\phi^n-\bar{\phi}^n}.
\end{eqnarray*}
Now we have to show that equality 
\begin{eqnarray*}
\frac{\phi^{n+r} - \bar{\phi}^{n+r} + \phi^{n-r} - \bar{\phi}^{n-r}}{\phi^n-\bar{\phi}^n}= \phi^r+ \phi^{-r}
\end{eqnarray*}
holds true. The l.h.s. and r.h.s. of this relation reduces immediately to
\begin{eqnarray*}
- \bar{\phi}^{n+r} - \bar{\phi}^{n-r} &=& - \phi^r\bar{\phi}^n - \bar{\phi}^n \phi^{-r}\\
\bar{\phi}^r +  \bar{\phi}^{-r} &=& {\phi}^r +  {\phi}^{-r}.
\end{eqnarray*}
Finally, we use a property $$-\frac{1}{\phi} = \bar{ \phi}$$ of the golden ratio and its conjugate. For even $r$ we have $$\frac{1}{\phi^r} = \bar{\phi}^r \enspace \Rightarrow \enspace \phi^{-r}= \bar{\phi}^r$$ which means that $\bar{\phi}^r +  \bar{\phi}^{-r} = {\phi}^r +  {\phi}^{-r}$ and completes the statement of the theorem.
\end{proof}

\begin{theorem} \label{thmPellSilver}  For the Pell sequence $(P_n)_{n \ge 0}$ and $r \in \mathbb{Z}$, $r \equiv 0 \pmod 2$
\begin{eqnarray}
P_{r+1} + P_{r-1} = {\varphi}^r + {\varphi}^{-r} .
\end{eqnarray}
\end{theorem}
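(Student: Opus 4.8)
The plan is to mirror the proof of Theorem \ref{thmFiboGolden}, with the silver ratio $\varphi$ in place of the golden ratio $\phi$ and the Pell closed form (\ref{PellClosed}) in place of the Binet formula. First I would substitute $P_m = (\varphi^m - \bar{\varphi}^m)/(\varphi - \bar{\varphi})$ into the Pell identity (\ref{eqPellFamily}), that is, into $(P_{r+1} + P_{r-1})P_n = P_{n+r} + P_{n-r}$. The common factor $\varphi - \bar{\varphi} = 2\sqrt{2}$ cancels from every term, so after dividing through by $P_n$ the assertion to be proved becomes
\[
\frac{\varphi^{n+r} - \bar{\varphi}^{n+r} + \varphi^{n-r} - \bar{\varphi}^{n-r}}{\varphi^n - \bar{\varphi}^n} = \varphi^r + \varphi^{-r}.
\]

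Next I would clear the denominator and expand the right-hand side as $\varphi^{n+r} + \varphi^{n-r} - \varphi^r\bar{\varphi}^n - \varphi^{-r}\bar{\varphi}^n$; cancelling the terms $\varphi^{n+r}$ and $\varphi^{n-r}$ that occur on both sides and then dividing by the common factor $-\bar{\varphi}^n$ leaves the purely algebraic identity $\bar{\varphi}^r + \bar{\varphi}^{-r} = \varphi^r + \varphi^{-r}$. The concluding step invokes the defining relation $\varphi\bar{\varphi} = (1+\sqrt{2})(1-\sqrt{2}) = -1$, equivalently $\bar{\varphi} = -\varphi^{-1}$, so that $\bar{\varphi}^{\,r} = (-1)^r\varphi^{-r}$ and $\bar{\varphi}^{-r} = (-1)^r\varphi^{r}$; the hypothesis $r \equiv 0 \pmod 2$ makes $(-1)^r = 1$ and the two sides agree.

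A slightly cleaner alternative, which sidesteps (\ref{eqPellFamily}) altogether, is to evaluate $P_{r+1} + P_{r-1}$ directly: from (\ref{PellClosed}) it equals $\bigl[\varphi^{r-1}(\varphi^2+1) - \bar{\varphi}^{r-1}(\bar{\varphi}^2+1)\bigr]/(\varphi - \bar{\varphi})$, and since $\varphi$ and $\bar{\varphi}$ are the roots of $x^2 = 2x + 1$ one checks $\varphi^2 + 1 = 2\sqrt{2}\,\varphi = (\varphi - \bar{\varphi})\varphi$ and $\bar{\varphi}^2 + 1 = -2\sqrt{2}\,\bar{\varphi} = -(\varphi - \bar{\varphi})\bar{\varphi}$, whence $P_{r+1} + P_{r-1} = \varphi^r + \bar{\varphi}^r$, and again $\bar{\varphi}^r = \varphi^{-r}$ for even $r$. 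In fact $P_{r+1}+P_{r-1}$ is the Pell--Lucas number $Q_r$, so the theorem says $Q_r = \varphi^r + \varphi^{-r}$ for even $r$, generalizing the numerical observations $6 = \varphi^2 + \varphi^{-2}$ and $34 = \varphi^4 + \varphi^{-4}$ recorded above. There is no genuine obstacle here: the computation is entirely parallel to Theorem \ref{thmFiboGolden}, and the only place demanding attention is the tracking of signs, specifically the use of the parity assumption on $r$, which is exactly what turns $(-1)^r$ into $1$ and thereby identifies $\bar{\varphi}^r$ with $\varphi^{-r}$.
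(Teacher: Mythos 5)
Your proposal is correct and its main line of argument is essentially the paper's own proof: substitute the closed form (\ref{PellClosed}) into (\ref{eqPellFamily}), reduce to the identity $\bar{\varphi}^r + \bar{\varphi}^{-r} = \varphi^r + \varphi^{-r}$, and finish with $\bar{\varphi} = -1/\varphi$ together with the parity of $r$. Your alternative direct evaluation of $P_{r+1}+P_{r-1}$ as the Pell--Lucas number $Q_r = \varphi^r + \bar{\varphi}^r$ is a clean bonus, but it is not needed to match the paper.
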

\begin{proof}
When applying $(\ref{PellClosed})$ to the relation (\ref{eqPellFamily}) we obtain
\begin{eqnarray*}
\Bigg [  \frac{\varphi^{r+1} - \bar{\varphi}^{r+1}}{\varphi - \bar{\varphi}}     + \frac{\varphi^{r-1} - \bar{\varphi}^{r-1}}{\varphi - \bar{\varphi}} \Bigg] = \frac{\varphi^{n+r} - \bar{\varphi}^{n-r}+ \varphi^{n-r} - \bar{\varphi}^{n-r}}{\varphi^n - \bar{\varphi}^n}
\end{eqnarray*}
which means that in order to prove the theorem we have to show that equality 
\begin{eqnarray*}
\frac{\varphi^{n+r} - \bar{\varphi}^{n+r} + \varphi^{n-r} - \bar{\varphi}^{n-r}}{\varphi^n-\bar{\varphi}^n}= \varphi^r+ \varphi^{-r}
\end{eqnarray*}
holds true. Comparison of the l.h.s. and the r.h.s. of this relation gives
\begin{eqnarray*}
- \bar{\varphi}^{n+r}-  \bar{\varphi}^{n-r} &=&  - {\varphi}^{r}  \bar{\varphi}^{n} -  \bar{\varphi}^{n} {\varphi}^{-r}\\
 - \bar{\varphi}^{n}( \bar{\varphi}^{r} +  \bar{\varphi}^{-r} ) &=&  - \bar{\varphi}^{n}(  {\varphi}^{r} +  {\varphi}^{-r} )\\
 {\varphi}^{-r} +  {\varphi}^{r}  &=&    {\varphi}^{r} +  {\varphi}^{-r}.
\end{eqnarray*}
Finally, we employ property that relate the silver ratio and its conjugate
$$
- \frac{1}{1 + \sqrt{2}} = 1 - \sqrt{2}
$$
to show that above relation holds true. This completes the statement of the theorem.
\end{proof}

\section{Some Further Identities}

Using Theorem \ref{thmFamily} and Lemma \ref{lemMplusN} various other identities can be proved. Some of them for the Fibonacci sequence are
\begin{eqnarray}
8 F_n &=& F_{n+4} + F_n + F_{n-4}, \enspace n \ge 4 \\
9 F_n &=& F_{n+4} + F_{n+1} + F_{n-2} + F_{n-4}, \enspace n \ge 4 \\
57F_n &=& F_{n+8} + F_{n+4} + F_{n+2} + F_{n-2} + F_{n-4} + F_{n-8}, \enspace n \ge 8
\end{eqnarray}
and some of them for Pell numbers are 
\begin{eqnarray}
3P_n&=& P_{n+1} + P_{n-1} + P_{n-2}, \enspace n \ge 2\\
4 P_n &=& P_{n+1} + P_{n-1} + P_{n-2}, \enspace n \ge 2\\
20 P_n &=& P_{n+3} + P_{n+2} + P_{n-3} + P_{n-4}, \enspace n \ge 4 \label{eqPell20P} \\
40 P_n &=& P_{n+4} + P_{n+2} + P_{n-2} + P_{n-4}, \enspace n \ge 4.
\end{eqnarray}
For the purpose to prove identity (\ref{eqPell20P}) we have
\begin{eqnarray*}
& &P_{n+3} + P_{n+2} + P_{n-3} + P_{n-4} =\\
 &=& P_{n-1} P_3 + P_nP_4+ P_{n-1}P_2 + P_nP_3 +P_{n-1}P_{-3} + P_nP_{-2}+ P_{n-1} P_{-4} + P_nP_{-3}\\
 &=& 5 P_{n-1} + 12 P_n + 2P_{n-1} + 5 P_n + 5P_{n-1} - 2P_n - 12 P_{n-1} + 5P_n\\
 &=& 20 P_n. 
\end{eqnarray*}
Such identities can also be proved combinatorially, combining arguments presented in proofs of Lemma \ref{lemTilings} and identity (\ref{eqPell6P}).

\end{document}